\def\hybrid{\topmargin 0pt      \oddsidemargin 0pt
        \headheight 0pt \headsep 0pt
        \voffset=-0.5cm
        \textwidth 6.5in        
        \textheight 9in         
        \marginparwidth 0.0in
        \parskip 5pt plus 1pt   \jot = 1.5ex}
\def\marginnote#1{}
\newtoks\amorpm
\edef\standardtime{{\ifnum\hour<12 \global\amorpm={am}%
        \else\global\amorpm={pm}\advance\hour by-12 \fi
        \ifnum\hour=0 \hour=12 \fi
        \number\hour:\ifnum\minute<10 0\fi\number\minute\the\amorpm}}
\edef\militarytime{\number\hour:\ifnum\minute<10 0\fi\number\minute}
\def\draftlabel#1{{\@bsphack\if@filesw {\let\thepage\relax
   \xdef\@gtempa{\write\@auxout{\string
      \newlabel{#1}{{\@currentlabel}{\thepage}}}}}\@gtempa
   \if@nobreak \ifvmode\nobreak\fi\fi\fi\@esphack}
        \gdef\@eqnlabel{#1}}
\def\@eqnlabel{}
\def\@vacuum{}
\def\draftmarginnote#1{\marginpar{\raggedright\scriptsize\tt#1}}
\def\draftlabel#1{{\@bsphack\if@filesw {\let\thepage\relax
   \xdef\@gtempa{\write\@auxout{\string
      \newlabel{#1}{{\@currentlabel}{\thepage}}}}}\@gtempa
   \if@nobreak \ifvmode\nobreak\fi\fi\fi\@esphack}
        \gdef\@eqnlabel{#1}}
\def\@eqnlabel{}
\def\@vacuum{}
\def\draftmarginnote#1{\marginpar{\raggedright\scriptsize\tt#1}}
\def\draft{\oddsidemargin -.5truein
        \def\@oddfoot{\sl preliminary draft \hfil
        \rm\thepage\hfil\sl\today\quad\militarytime}
        \let\@evenfoot\@oddfoot \overfullrule 3pt
        \let\label=\draftlabel
        \let\marginnote=\draftmarginnote
   \def\@eqnnum{(\theequation)\rlap{\kern\marginparsep\tt\@eqnlabel}%
\global\let\@eqnlabel\@vacuum}  }
\def\numberbysection{\@addtoreset{equation}{section}
        \def\theequation{\thesection.\arabic{equation}}}
\def\underline#1{\relax\ifmmode\@@underline#1\else
        $\@@underline{\hbox{#1}}$\relax\fi}
\def\titlepage{\@restonecolfalse\if@twocolumn\@restonecoltrue\onecolumn
     \else \newpage \fi \thispagestyle{empty}\c@page\z@
        \def\thefootnote{\fnsymbol{footnote}} }
\def\endtitlepage{\if@restonecol\twocolumn \else  \fi
        \def\thefootnote{\arabic{footnote}}
        \setcounter{footnote}{0}}  
\def\beq{\begin{equation}}
\def\eeq{\end{equation}}
\def\bea{\begin{eqnarray}}
\def\eea{\end{eqnarray}}
\def\p{\partial}
\def\G{\Gamma}
\def\e{\varepsilon}
\def\R{{\cal R}}
\def\A{{\cal A}}
\def\L{{\cal L}}
\def\M{{\cal M}}
\def\MH{{\cal M}_{g,n}(\underline{h})}
\def\TH{{\cal T}_{g,n}(\underline{h})}
\def\MR{{\cal M}^{real}_{g,n}(\underline{h})}
\def\oM{\overline{\cal M}}
\def\P{{\cal P}}
\def\dim{\operatorname{dim}}
\def\res{\operatorname{res}}
\def\Sym{\operatorname{Sym}}
\def\wt{\widetilde}
\def\wh{\widehat}
\def\un{\underline}
\def \matrix #1 {\left(\begin{array}{cc} #1 \end{array}\right)}
\newtheorem{theo}{Theorem}[section]
\newtheorem{prop}[theo]{Proposition}
\newtheorem{lem}[theo]{Lemma}
\newtheorem{conj}[theo]{Conjecture}
\theoremstyle{definition}
\newtheorem{df}[theo]{Definition}
\newtheorem{rem}[theo]{Remark}
\def\bbZ{{\mathbb Z}}
\def\bbR{{\mathbb R}}
\def\bbC{{\mathbb C}}
\def\bbP{{\mathbb P}}
\def\Mh{\M_{g,\wh 1}}
\begin{document}
\begin{titlepage}
\title{The universal Whitham hierarchy and geometry of the moduli  space of pointed
Riemann surfaces}
\author{S.Grushevsky
\thanks{Department of Mathematics, Princeton University,
Princeton, NJ 08544, USA; e-mail:
sam@math.princeton.edu. Research is supported in part by National
Science Foundation under the grant DMS-05-55867.}
\and I.Krichever
\thanks{Columbia University, New York, USA and Landau Institute for
Theoretical Physics, Moscow, Russia; e-mail:
krichev@math.columbia.edu.}}

\date{\today}

\maketitle

\begin{abstract}
We show that certain structures and constructions of the
Whitham theory, an essential part of the perturbation theory of soliton
equations, can be instrumental in understanding the
geometry of the moduli spaces of Riemann surfaces with marked points. We use the ideas of the Whitham theory to define local coordinates and construct foliations on the moduli spaces. We use these constructions to give a new proof of the Diaz' bound on the dimension of complete subvarieties of the moduli spaces. Geometrically, we study the properties of meromorphic differentials with real periods and their degenerations.
\end{abstract}

\end{titlepage}

\section{Introduction}
Solitons originally arose in the study of shallow waves. Since then, the
notion of soliton equations has broadened considerably, and it now embraces a wide class of
non-linear ordinary and partial differential equations, which all share the
characteristic feature of being expressible as a compatibility condition for an
auxiliary system of linear differential equations. The general algebro-geometric
construction of exact periodic and quasi-periodic solutions of soliton equations was proposed by the second-named author in
\cite{kr1,kr2}, where Baker-Akhiezer functions were introduced
(the analytical properties of Baker-Akhiezer functions are generalization of
properties of the Bloch solutions of the finite-gap Sturm-Liouville operators,
established during the initial development of the finite-gap integration
theory of the Korteweg-de Vries equation, see \cite{nov-dub-mat, mat-its, Mckean, Lax}).

The algebro-geometric solutions of soliton equations corresponding to smooth algebraic curves can be
explicitly written in terms of the Riemann $\theta$-function. The celebrated Novikov's
conjecture: the Jacobians of curves are exactly those indecomposable principally
polarized abelian varieties (ppav) whose theta-functions provide explicit solutions of
the Kadomtsev-Petviashvili (KP) equation, was the first evidence of the now well-accepted usefulness of combining the techniques of integrable systems and algebraic geometry to obtain new results in both fields.

Novikov's conjecture was proved by Shiota in \cite{shiota}, and until relatively
recently had remained the most effective solution of the Riemann-Schottky problem, the problem of characterizing Jacobians among all ppavs.
A much stronger characterization of Jacobians was suggested by Welters, who, inspired
by Novikov's conjecture and Gunning's theorem \cite{gun1}, conjectured in \cite{wel1}
that a ppav is a Jacobian if and only if its Kummer variety has at least {\it one} trisecant (and then it follows that in fact it has a four-dimensional family of trisecants).

Recall that for a ppav $X$ with principal polarization $\Theta$ the  Kummer variety $K(X)$ is the image of the complete linear system $|2\Theta|$. This is to say that the coordinates for the embedding $K:X/\pm 1\hookrightarrow\bbC\bbP^{2^g-1}$ are given by a basis of the sections of $|2\Theta|$, consisting of theta functions of the second order
$$
 \Theta[\e](z):=\theta[\e,0](2\tau,2z):=\sum\limits_{n\in\bbZ^g} \exp (\pi i (2n+\e)^t\tau(n+\e/2)+4(n+\e/2)^t z)
$$
for all $\e\in(\bbZ/2\bbZ)^{2g}$, where $\tau$ is the period matrix of $X$. A projective $(m-2)$-dimensional plane
$\mathbb{CP}^{m-2}\subset\mathbb{CP}^{2^g-1}$ intersecting $K(X)$ in at least $m$ points is called
an $m$-secant of the Kummer variety.

The Kummer varieties of Jacobians of curves were shown to admit a four-dimensional family of trisecant lines (this is the Fay-Gunning trisecant formula, see
\cite{fay}). It was then shown by Gunning \cite{gun1} that the existence of a
one-dimensional family of trisecants that are translates of each other characterizes Jacobians among
all ppavs. The Welters' trisecant conjecture was recently proved by the second-named
author in (\cite{kr-schot,kr-tri}). In \cite{kr-prym, kr-quad} the soliton
theory was used to obtain a solution of another classical problem of algebraic geometry ---
characterizing Prym varieties among indecomposable ppavs.

\smallskip
The algebro-geometric perturbation theory for two-dimensional soliton equations
was developed in \cite{kr-av,kr-tau}. It was stimulated by the application of the
Whitham approach for (1+1) integrable equation of the KdV type, see \cite{gur,ffm,dob}. As
usual, in the perturbation theory ``integrals'' of an initial equation become functions
of the ``slow'' variables $\e t_A$ (where $\e$ is a small parameter). ``The
Whitham equations'' is a name given to equations that describe ``slow'' variations of
``adiabatic integrals''.

\smallskip
We denote by $\M_{g,n}$ the moduli space of smooth algebraic
curves $\G$ of genus $g$ with $n$ distinct labeled marked points $p_1,\ldots, p_n$ (i.e.~not taking the quotient under the symmetric group). The universal Whitham hierarchy, as defined in \cite{kr-tau} is
a hierarchy of commuting equations on the total space of the bundle $\wh \M_{g,n}\to\M_{g,n}$ of infinite rank: $\wh\M_{g,n}$ is the moduli space of  smooth
algebraic curves $\G$ of genus
$g$ with $n$ labeled marked points and a choice of a holomorphic local coordinate $z_i$, which we think of as an infinite power series, in a neighborhood $U_{p_i}$ of each marked point $p_i$ (it is customary in the theory of integrable equations to write the local coordinate as $k^{-1}$ instead of $z$, so that we have $k^{-1}_i(p_i)=0$, i.e.~$k_i(p_i)=\infty$). Thus we have the bundle
\begin{equation}
\wh \M_{g,n} = \{\G,p_i\in\G,z_i:U_{p_\alpha}\to\bbC,
\ i=1,\ldots,n\} \quad\longrightarrow\quad\M_{g,n}=\{\G,p_i\in\G\rbrace.
\label{Mgn}
\end{equation}
For any point in $\wh\M_{g,n}$ and for a point of the Jacobian $J(\G)$
the algebro-geometric construction gives a quasi-periodic solution of some
integrable partial non-linear differential equation (for a given non-linear
integrable equation the corresponding set of data has to be specified. For
example, the solutions of the KP hierarchy correspond to the
case $n=1$. The solutions of the two-dimensional Toda lattice correspond to the case
$n=2$).

The construction of special ``algebraic'' orbits of the Whitham hierarchy, proposed in
\cite{kr-av}, has already found its applications to the theory of topological quantum
field models and to Seiberg-Witten solution of $N=2$ supersymmetric gauge models (see
details in \cite{kp1,kp2} and references therein).

The moduli spaces of curves with marked points have curious vanishing
properties of tautological classes, Chow groups and rational cohomology. In \cite{arb}
Arbarello constructed a stratification of the moduli space $\M_g$ of smooth Riemann
surfaces of genus $g$ and provided some evidence that it can be a useful tool for
investigating the geometric properties of $\M_g$. Later Diaz in \cite{diaz} used a
variant of Arbarello stratification to show that $\M_g$ does not contain complete
(complex) subvarieties of dimension $g-1$. Some years later, using a similar
stratification Looijenga proved in \cite{loo} that the tautological classes of degree greater than
$g-2$ vanish in the Chow ring of $\M_g$, which implies Diaz' result (the Hodge class $\lambda_1$ is ample on $\M_g$, and thus for any complete $d$-dimensional subvariety $X\subset\M_g$ we would have $\lambda_1^d\cdot X>0$, while $\lambda_1^{g-1}=0$, as a tautological class).
Hain and Looijenga \cite{halo} then asked whether the reason for this vanishing would be the existence of a cover of the moduli space of curves by at most $g-1$ affine open sets. Roth and Vakil \cite{rova} studied affine stratifications and asked whether they could be given for $\M_g$. The following conjecture is widely believed to be true, and would imply all of the above results
\begin{conj}[\cite{loo,halo,rova}] Let $g,n\geq 0$ be such that $2g-2+n>0$.
Then the moduli space $\M_{g,n}$
of smooth genus $g$ algebraic curves with $n$ marked points
has a stratification
\begin{equation}\label{looij}
\M_{g,n}=\bigcup_{i=1}^{g-\delta_{n,0}} S_i,\ \ \bar S_j=\bigcup_{i\leq j} S_i
\end{equation}
such that each locally closed stratum $S_i$ is affine.
\end{conj}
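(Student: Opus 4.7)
The plan is to construct the stratification using real-normalized meromorphic differentials, the key analytic objects of the Whitham hierarchy. For each pointed curve $(\Gamma,p_1,\ldots,p_n)\in\M_{g,n}$, fix a choice of principal parts at the marked points (say, a double pole at $p_1$ with real leading coefficient when $n\geq 1$, or suitable real-type data involving residues at pairs of points when $n=0$); there exists a unique meromorphic differential $\omega$ on $\Gamma$ with these principal parts and all $A$- and $B$-periods real, because the imaginary parts of periods give an $\bbR$-linear isomorphism from the $g$-dimensional real space of holomorphic differentials onto $\bbR^g$. The harmonic function $f(p):=\operatorname{Re}\int_{p_0}^{p}\omega$ is then single-valued on $\Gamma$ away from the poles, and its critical points are precisely the zeros of $\omega$.

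The strata $S_i$ would be defined by the Morse-theoretic combinatorial type of $f$, meaning the adjacency graph of its critical points or the pattern of coincidences among its critical values. The open stratum $S_{g-\delta_{n,0}}$ consists of curves where $f$ is as generic as possible, with all critical values distinct and no saddle connections among its horizontal trajectories, while deeper strata arise by progressively coalescing critical values. A careful count matching the intrinsic complex-geometric dimension should yield exactly the bound $g-\delta_{n,0}$ of (\ref{looij}), consistent with the Arbarello-Diaz-Looijenga perspective and with the dimension bound reproved in the body of this paper.

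Affineness of each $S_i$ would follow, at least locally, from the fact that the complex critical values $\int_{p_0}^{q_j}\omega$ of the primitive of $\omega$ (equivalently, a distinguished set of periods along cycles through the zeros of $\omega$) furnish local holomorphic coordinates on the Whitham-theoretic lift of $\M_{g,n}$. Restricting to the slice where the principal parts are fixed and descending to $\M_{g,n}$ should realize $S_i$ as an open subset of an affine variety, cut out inside its closure by the nonvanishing of differences of such coordinate functions.

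The central obstacle is the globalization of this picture. The differential $\omega$ is defined canonically only after fixing both the principal parts and an $\bbR$-linear period normalization, which depends discontinuously on a choice of symplectic homology basis; one therefore naturally works on a Teichm\"uller-like cover, and descent to $\M_{g,n}$ requires mapping-class-group invariance of the stratification. Compounding this are the degenerations in which a zero of $\omega$ collides with a marked point, a horizontal separatrix becomes a saddle connection (the stratum boundary itself), or the curve approaches the boundary of $\M_{g,n}$ in the Deligne-Mumford sense. Each of these requires a boundary analysis of the real-period foliation of the type initiated in the body of this paper. Showing that the divisor defining $S_i$ inside $\bar S_i$ is \emph{ample}, rather than merely a divisor, is the heart of the difficulty and the reason the conjecture remains open.
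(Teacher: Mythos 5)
You are attempting to prove a statement that the paper itself presents only as a \emph{Conjecture} (attributed to Looijenga, Hain--Looijenga and Roth--Vakil): the paper offers no proof, explicitly records that affine stratifications are known only for $\M_g$ with $g\le 5$ (Fontanari--Looijenga) and that not even conjectural candidates exist in general, and proves only a consequence of the conjecture, namely Diaz' bound. So there is no argument in the paper to compare yours against, and your proposal --- which by your own account leaves ``the heart of the difficulty'' unresolved --- is a research plan rather than a proof.

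Beyond the gap you admit (affineness of the strata), there are concrete reasons the plan as stated would fail. First, the loci you propose, defined by coincidences among critical values or by saddle connections of horizontal trajectories of $\omega$, are cut out by conditions on ${\rm Im}\int_{q_1}^{q_j}\omega$; by Proposition \ref{fglob} only these imaginary parts (not the full complex critical values $\phi_j$, which are defined merely up to adding real periods) are globally well defined, and they are real-analytic, not holomorphic. Coincidence of imaginary parts is a real-codimension-one, real-analytic condition, so your $S_i$ would not be complex algebraic subvarieties of $\M_{g,n}$ at all, let alone affine varieties in the sense the conjecture requires. Second, even granting algebraicity, a stratification by Morse-theoretic combinatorial type is indexed by a poset that is far from a chain, whereas (\ref{looij}) demands a linearly ordered flag of closures $\bar S_j=\bigcup_{i\le j}S_i$ with exactly $g-\delta_{n,0}$ terms; nothing in your construction produces either the linear order or that count. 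Third, the functions $\sigma(\phi),\alpha,\beta$ of Theorem \ref{coords} furnish local coordinate charts, not an algebraic affine structure: a locally closed subvariety admitting such charts need not be affine, since affineness is a global property (e.g.\ detected by cohomological criteria or by an ample divisor on the closure, as you note), so ``affineness follows from the existence of these coordinates'' is not a valid inference. Finally, the real normalization itself requires fixing jets of local coordinates at the marked points to pin down the principal parts, so the construction naturally lives on a bundle over $\M_{g,n}$ and a descent argument is missing.
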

Existence of such a stratification would also imply bounds on the homotopical dimension of $\M_{g,n}$ obtained by Mondello \cite{mo}.

In \cite{fo-la} Fontanari and Looijenga construct such affine  stratifications for
$\M_g$ for $g\le 5$. Both Arbarello's and Diaz' stratifications of $\M_g$ have the
right number of strata, but it is not known whether the strata are affine. Not even conjectural candidates have been proposed for covers of $\M_g$ by at most $g-1$ affine open sets.

Our first goal is to give a new proof of Diaz' theorem. Our approach, based on the
constructions of the Whitham theory, does not use any kind of stratification. What we do in a sense can be considered a generalization of the constructions and computations in the Hurwitz space (the space of Riemann surfaces together with a meromorphic function with prescribed pole orders), which were used to great effect in \cite{elsv}). Our construction is on the total space of the bundle of meromorphic differentials on Riemann surfaces with prescribed poles. If the meromorphic differential has no residues at marked points, it is the derivative of a meromorphic function, and thus the Hurwitz space is a subvariety of this space of differentials corresponding to the case when the differential is exact, i.e.~when all its periods are equal to zero. There are two advantages to our construction: that the compactification is straightforward, and that the subbundle where the singular parts are fixed admits a section, the unique meromorphic differential with prescribed singular parts and real periods (if all these real periods are in fact zero, we again recover the Hurwitz space).

We use this construction of {\it real-normalized} differentials to show that on the moduli spaces of curves with fixed finite jets of local
coordinates at marked points there exist canonical {\it real-analytic} local coordinates. Moreover, part of these local coordinates are in fact globally defined real functions, which become harmonic when
restricted to leaves of a canonically defined foliation on the moduli space. The
maximum principle for harmonic functions then implies that codimension of any compact
cycle in the moduli space can not be less than  the dimension of the corresponding
foliation, and we thus obtain a proof of Diaz' theorem.

\section{Algebraic orbits of the Whitham hierarchy}
The notion of {\it algebraic orbits} of the universal Whitham hierarchy is at the heart
of all the following constructions. They are defined as leaves of a certain ``canonical''
foliation on the moduli space of curves with marked points, together with a meromorphic differential with prescribed pole orders at the points.

\begin{df}
For any set of positive integers $\un{h}=h_1,\ldots, h_n$ we denote by
$$
 \MH:=\left\{(\G,p_1,\ldots,p_n)\in\M_{g,n};
 \omega\in H^0(K_\G+\sum h_i p_i)\setminus\mathop{\cup}\limits_{j=1}^n H^0(K_\G-p_j+\sum h_i p_i)\right\}
$$
the moduli space of curves of genus $g$ with $n$ marked points, together with a meromorphic differential with poles of order {\it exactly} $h_i$ at each $p_i$ (in the literature on integrable systems $\omega$ is often denoted $dE$ where $E$ is thought of as the energy functional. It does not mean that $dE$ is then assumed to be exact).
The residues of the differential give $n$ global well-defined functions on $\MH$:
\begin{equation}\label{rescoords}
 \rho_i(\G,\un{p},\omega):=\res_{p_i}\omega.
\end{equation}
\end{df}

\begin{rem}
Note that in the definition we have required the poles to be of order exactly $h_i$ --- and thus obtained an open subset of the moduli of curves with differentials of poles of order at most $h_i$. This will be useful for our construction of local coordinates and foliations in $\MH$, as we will be able to say that the degree of any differential in $\MH$ is the same, and thus it has a fixed number of zeros.
\end{rem}

The moduli space $\MH$ is an open subset of the total space of the complex vector bundle $K_\Gamma+\sum h_ip_i$ of meromorphic differentials over $\M_{g,n}$, and thus is a complex orbifold of complex dimension
\begin{equation}\label{dimMgnh}
 \dim_\bbC\MH=\dim_\bbC\M_{g,n}+h^0(K_\G+\sum\limits_{i=1}^n h_i p_i)=3g-3+n+g-1+\sum\limits_{i=1}^n h_i
\end{equation}
(we have $h^0(K_\G)=g$, and for a meromorphic differential the only condition is for the sum of the residues to be zero).

The moduli space of curves with marked points and an {\it exact} meromorphic differential with prescribed pole orders is a subset of $\MH$. Thus the Hurwitz space --- the moduli space of curves with marked points and a meromorphic function with pole orders $h_i-1$ at marked points --- is the subset of $\MH$ consisting of differentials whose integral over any closed curve is zero. This is to say these are differentials with all periods zero, and all residues zero. We will now describe a canonical foliation on $\MH$, for which this Hurwitz space will one leaf. To define this foliation, we will perform a construction on the universal cover, (an open set in) the total space of a vector bundle over the Teichm\"uller space, and then argue that the construction is invariant under the action of the mapping class group.
\begin{df}
For a curve $\G\in\M_g$ we call a basis $A_1,\ldots,A_g,$ $B_1,\ldots,B_g\in H_1(\G,\bbZ)$ standard if the intersection numbers are $A_i\cdot A_j=B_i\cdot B_j=0$, $A_i\cdot B_j=\delta_{ij}$. We denote $\TH$ the moduli of objects as in $\MH$ together with a choice of a standard basis. On $\TH$ we have the well-defined global functions
\begin{equation}\label{intcoords}
  \alpha_i(\G,\un{p},\omega,\un{A},\un{B}):=\oint_{A_i}\omega; \qquad
  \beta_i(\G,\un{p},\omega,\un{A},\un{B}):=\oint_{B_i}\omega
\end{equation}
(from the point of view of integrable systems, these integrals are some of the times for the universal Whitham hierarchy, and thus often denoted $T$). We then define the foliation $\L$ on $\TH$ to have leaves given by, for any complex numbers $r_1,\ldots,r_n,a_1,\ldots,a_g,b_1,\ldots,b_g$,
\begin{equation}\label{leaves}
  L_{\un{r},\un{a},\un{b}}:=\left\{(\G,\un{p},\omega,\un{A},\un{B})\in\TH \mid \rho_j=r_j, \alpha_i=a_i, \beta_i=b_i,\ \forall j=1,\ldots,n,\ \forall i=1,\ldots, g\right\}
\end{equation}
\end{df}
Notice that on $\MH$ one cannot globally talk about periods, as there is no chosen basis of cycles, and thus there are no global functions $\alpha_i$ or $\beta_i$. However, the condition of the periods being constant is independent of the choice of the basis, and we thus have
\begin{lem}
The subvarieties $L_{\un{r},\un{a},\un{b}}\subset\TH$ are permuted by the action of the mapping class group, and thus the family $\L$ of them for all values of $r,a,b$ defines a complex foliation of $\MH$ by complex submanifolds. By foliation here we only mean that there exists a leaf through every point of $\MH$, and that no two leaves intersect.
\end{lem}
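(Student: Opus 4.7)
The plan is to analyze the action of the mapping class group on $\TH$. By the very construction of $\TH$, this group acts trivially on the underlying data $(\G,\un p,\omega)$ and acts nontrivially only on the chosen standard symplectic basis $(\un A,\un B)$; the induced action on $H_1(\G,\bbZ)$ factors through $Sp(2g,\bbZ)$.

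First, I would track how the defining global functions on $\TH$ transform under an element $\varphi$ of the mapping class group. The residue $\rho_i=\res_{p_i}\omega$ is intrinsic to $(\G,\un p,\omega)$ and makes no reference to the basis, so $\rho_i\circ\varphi=\rho_i$. In contrast, the period integrals transform linearly by the action on homology: if $\varphi$ carries $(\un A,\un B)$ to a new basis whose cycles are integer linear combinations of the old via some matrix $M\in Sp(2g,\bbZ)$, then the new period vector $(\un\alpha',\un\beta')$ is obtained from $(\un\alpha,\un\beta)$ by the same matrix $M$. Consequently, $\varphi$ maps $L_{\un r,\un a,\un b}$ bijectively onto $L_{\un r,\un a',\un b'}$, where $(\un a',\un b')$ is the image of $(\un a,\un b)$ under $M$. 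This proves the first claim, that the family $\L$ is permuted.

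To descend to $\MH=\TH/\text{(mapping class group)}$, I would declare each leaf in $\MH$ to be the image $\pi(L_{\un r,\un a,\un b})$ under the covering map $\pi\colon\TH\to\MH$. Two such images are either equal (precisely when the parameters lie in the same mapping class group orbit) or disjoint: if $x\in\pi(L_{\un r,\un a,\un b})\cap\pi(L_{\un r',\un a',\un b'})$, choose lifts $\tilde x,\tilde x'$ of $x$ in the respective leaves and a mapping class $\varphi$ with $\varphi(\tilde x)=\tilde x'$; by the preceding paragraph $\varphi$ must then carry the entire leaf through $\tilde x$ onto the entire leaf through $\tilde x'$, so the two images in $\MH$ coincide. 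Existence of a leaf through every point of $\MH$ is immediate by lifting and reading off the values of $\rho_i,\alpha_i,\beta_i$ from the lift. The complex-analytic character of the leaves follows because on $\TH$ they are cut out by the holomorphic functions (\ref{rescoords}) and (\ref{intcoords}).

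The only substantive input beyond this formal bookkeeping is the classical fact that the mapping class group acts on $H_1(\G,\bbZ)$ preserving the intersection pairing, so that integration of $\omega$ against a transformed basis produces a $Sp(2g,\bbZ)$-linear transformation of the period vector. Given that, no serious obstacle remains; the main thing to guard against is a notational slip in the direction of the symplectic transformation $M$, but the conclusion (that each leaf is sent to a leaf) is independent of such conventions.
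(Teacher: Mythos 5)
Your proposal is correct and follows essentially the same route as the paper: observe that residues are intrinsic while the period vector transforms by the induced $Sp(2g,\bbZ)$ action on homology, so each leaf $L_{\un{r},\un{a},\un{b}}$ is carried to $L_{\un{r},G(\un{a},\un{b})}$, and the family of leaves descends to $\MH$. Your extra bookkeeping on the descent (disjointness or equality of images, existence of a leaf through every point) is a harmless elaboration of what the paper leaves implicit; note that, as in the paper, smoothness of the leaves is not established here but is deferred to the local-coordinates theorem.
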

\begin{proof}
Indeed, note that though the definition of $L_{\un{r},\un{a},\un{b}}$ depends on the choice of the basis $\un{A},\un{B}$ for $H_1(X,\bbZ)$ and thus only makes sense on $\TH$, if we choose a different basis $\un{A'},\un{B'}$, then the new basis is obtained from the old one by a linear transformation $G\in Sp(2g,\bbZ)$, and thus the periods $\alpha',\beta'$ of a differential with respect to the new basis are obtained by applying $G$ to the periods $\alpha,\beta$, and thus the manifold $L_{\un{r},\un{a},\un{b}}$ is mapped to $L_{\un{r},G(\un{a},\un{b})}$, so the action of the mapping class group permutes the leaves, and thus preserves the foliation.
\end{proof}
Each leaf of $\L$ is given by $n-1+2g$ equations (recall that the sum of the residues is equal to 1), and we thus expect it to have codimension $n-1+2g$ in $\MH$. To prove that the codimension of a leaf is indeed this, one needs to show that the functions $\rho,\alpha,\beta$ are independent, i.e.~that prescribing their values imposes independent conditions on $\TH$. It turns out that this is indeed the case, and moreover that this set of functions can be completed to a local coordinate system near any point of $\MH$ (for local coordinates it does not matter whether we work with $\MH$ or the universal cover $\TH$). This will also imply that the leaves of $\L$ are smooth. The construction of such local coordinates is given in \cite{kp1}. We will now summarize it for completeness and for future use.

\begin{rem}
For motivation, note that the leaf $L_{\un{0},\un{0},\un{0}}$ corresponding to zero values of all periods and residues, consists of exact differentials, and thus is simply the Hurwitz space of meromorphic functions with prescribed pole orders at the marked point. As described by Ekedahl, Lando, Shapiro, and Vainshtein in \cite{elsv}, the coordinates  on the Hurwitz space are given by the Lyashko-Looijenga mapping: associating to the meromorphic function the (unordered) set of its critical values.

The critical point of a function is a zero of its differential, and so makes sense in our situation. The critical value of a function at the critical point is the integral of the differential --- and thus we need to fix the path of integration. Note that when dealing with Hurwitz spaces, one often allows only simple branching away from infinity, i.e.~requires all critical values to be distinct. We will not require this, and thus to get real analytic coordinates also along the locus where some critical values are multiple, we need to use the symmetric functions of critical values as coordinates rather than the critical values themselves.

Analogously to the Hurwitz space situation, where functions are only defined up to an additive constant, our construction should rather be performed on the moduli of curves with marked points and a chosen (multivalued) abelian integral, i.e.~a chosen integral of the meromorphic differential --- which is unique up to an additive constant. This space is an affine bundle over the moduli of curves with marked points, and is of independent interest, but we will not need the details of it for what follows.
\end{rem}

To formally define local coordinates, we will use the critical values of the integral of $\omega$. Indeed, write the divisor of $\omega$ on $\G$ as
\begin{equation}
 (\omega)=\sum_{s=1}^{2g-2+\sum h_i} q_s
\end{equation}
(where some of the $q_s$ may be the same). Consider then the integrals
\begin{equation}\label{defphi}
 \phi_j:=\int_{q_1}^{q_j}\omega,\qquad{\rm for}\ j=2,\ldots,2g-2+\sum h_i
\end{equation}
where the integral of course depends upon the path of integration, and on the choice of the numbering of the points in the divisor of $\omega$.  Note that even if we work on $\TH$ and require the path  not to intersect any of the loops $A_i$ or $B_i$, the integral still depends on the path of integration if the residues of $\omega$ are not all zero. We let
\begin{equation}\label{defsigma}
 \sigma_k:=s_k(\phi_2,\ldots,\phi_{2g-2+\sum h_i})\qquad{\rm for}
 \ s=1\ldots 2g-3+\sum h_i
\end{equation}
be the values of the elementary symmetric polynomials of the critical values $\phi_j$.

Locally in a neighborhood of a point $(\G,\un{p},\omega)\in\MH$ we can choose a basis for cycles, a labeling for the points in the divisor, and a family of paths of integration. Notice that even if some of the points $q_s$ coincide, the tuple $\un{q}\in  \Sym^{2g-2+\sum h_i}(\G)$ deforms holomorphically, and thus we can locally choose the labeling of the points, so that each point varies holomorphically. Thus in a neighborhood of any point of $\MH$ we can choose local holomorphic functions $\phi_j$, dependent on the choices made.
\begin{theo}[\cite{kp1}, Appendix]\label{coords}
The set of functions $\alpha,\beta,\sigma(\phi)$ (note that the total number of functions is $n-1+2g+2g-3+\sum h_i=\dim_\bbC\MH$)
give local holomorphic coordinates in a neighborhood of any point of $\MH$, dependent on the choices made above for defining $\alpha,\beta,\sigma(\phi)$.
\end{theo}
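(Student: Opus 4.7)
My plan is a period-map argument, generalizing the Veech--Masur ``period coordinates'' for strata of holomorphic abelian differentials to our meromorphic setting. Fix $(\G,\un p,\omega)\in\MH$, set $N:=2g-2+\sum h_i$, and work in a small neighborhood $U$. As noted in the paragraph preceding the theorem, the divisor $(\omega)=\sum_{s=1}^N q_s$ deforms holomorphically as a point of $\Sym^N(\G)$, so on an appropriate local cover $\wt U\to U$ we may label the zeros so that each $q_s$ varies holomorphically. Fixing in addition a symplectic basis $\un A,\un B$ of $H_1(\G,\bbZ)$ and paths $\gamma_j$ from $q_1$ to each other $q_j$ (all extending holomorphically over $\wt U$), the functions $\rho_j,\alpha_i,\beta_i,\phi_j$ become well-defined holomorphic functions on $\wt U$, and their total count is $(n-1)+2g+(N-1)=\dim_\bbC\MH$.

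Since dimensions match, it suffices to show that the differential of the ``period map''
\[
 \P(\G,\un p,\omega):=(\rho_1,\dots,\rho_{n-1},\alpha_1,\dots,\alpha_g,\beta_1,\dots,\beta_g,\phi_2,\dots,\phi_N)
\]
is injective at every point of $\wt U$. For this I would identify $T_{(\G,\un p,\omega)}\MH$ with the relative cohomology $H^1(\G\setminus\un p,\un q;\bbC)$ modulo the residue relation; both sides have dimension $\dim_\bbC\MH$. A first-order deformation $\delta$ produces a Kodaira--Spencer variation $\dot\omega$, which is a meromorphic differential with the same pole orders at $\un p$; Rauch-type variational formulas express $d\rho_j(\delta),\ d\alpha_i(\delta),\ d\beta_i(\delta),\ d\phi_j(\delta)$ as the pairings of $\dot\omega$ against small loops around $p_j$, the absolute cycles $A_i,B_i$, and the relative paths $\gamma_j$ respectively (with appropriate boundary corrections accounting for the motion of the $q_s$). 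Vanishing of $d\P(\delta)$ therefore asserts that $[\dot\omega]=0$ in $H^1(\G\setminus\un p,\un q;\bbC)$, and via the identification above this forces $\delta=0$. The identification itself is the infinitesimal version of the classical fact that the flat structure on $\G\setminus(\un p\cup\un q)$ determined by $\int\omega$ can be reconstructed from its (absolute and relative) periods.

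It remains to descend from $\wt U$ to $U\subset\MH$. The labeling of $q_2,\dots,q_N$ was ambiguous under the symmetric group $S_{N-1}$, so the $\phi_j$ do not descend, but the $\sigma_k=s_k(\phi_2,\dots,\phi_N)$ are invariant and do. By Vieta's formulas, the map $(\phi_2,\dots,\phi_N)\mapsto(\sigma_1,\dots,\sigma_{N-1})$ realizes the quotient $\bbC^{N-1}\to\Sym^{N-1}\bbC\cong\bbC^{N-1}$, supplying local holomorphic coordinates on the symmetric product even at points where several $q_s$ coincide; this is exactly why we must pass to symmetric functions. Composing the local biholomorphism $\P$ with this quotient identifies $\rho,\alpha,\beta,\sigma(\phi)$ as local holomorphic coordinates on $\MH$. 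The main obstacle I expect is the identification of the tangent space with relative cohomology together with the careful variational formula for $d\phi_j$ (in particular, keeping track of the boundary contributions from the varying zeros $\dot q_s$), after which the injectivity of $d\P$ reduces to Stokes' theorem applied to a meromorphic differential whose residues and absolute and relative periods all vanish.
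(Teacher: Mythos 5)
Your route is genuinely different from the paper's. The paper argues by contradiction: a linear dependency among the differentials of $\rho,\alpha,\beta,\sigma$ at $\G_0$ would give a one-parameter family $\G_t$ along which all these functions are stationary; using $f=\int\omega$ as a connection to fix the point, the derivative $\partial_t F_i(f_t^{-1}(x),t)$ of each normalized abelian integral is shown to be a holomorphic differential with no jumps (since $\alpha,\beta$ are fixed), no poles at the zeros of $\omega$ (since the critical values are fixed), and zero $A$-periods, hence identically zero; therefore $\dot\tau=0$, contradicting infinitesimal Torelli away from the hyperelliptic locus, with the hyperelliptic case handled separately in the reference. Your period-map argument via $H^1(\G\setminus\un{p},\un{q};\bbC)$ avoids Torelli entirely (and hence the hyperelliptic caveat), and on the open dense locus where $\omega$ has simple zeros and pairwise distinct critical values it is a correct, if standard-results-citing, proof. (A small point: the relative cohomology group already has dimension $\dim_\bbC\MH$; the residue relation is automatic for the class of a differential, so there is nothing further to quotient by.)

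The genuine gap is in your descent step, and it occurs exactly at the points the symmetric functions were introduced to handle. The Jacobian of $(\phi_2,\ldots,\phi_N)\mapsto(\sigma_1,\ldots,\sigma_{N-1})$ is the Vandermonde $\pm\prod_{i<j}(\phi_i-\phi_j)$: for instance, if $\phi_2=\phi_3=c$ at the base point then $d\sigma_2=\phi_3\,d\phi_2+\phi_2\,d\phi_3+\cdots=c\,d\sigma_1+\cdots$, so the $d\sigma_k$ become dependent on the span of the $d\phi_j$. Hence the inference ``$(\rho,\alpha,\beta,\phi)$ is a local biholomorphism on $\wt U$, the $\sigma_k$ are coordinates on $\Sym^{N-1}\bbC$, therefore the composite $(\rho,\alpha,\beta,\sigma)$ is a local biholomorphism on $U$'' is invalid wherever two critical values coincide: composing an immersion with the quotient $\bbC^{N-1}\to\Sym^{N-1}\bbC$ destroys injectivity of the differential on the diagonal. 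Separately, at points where zeros of $\omega$ collide, the identification $T\MH\cong H^1(\G\setminus\un{p},\un{q};\bbC)$ that carries your whole argument is no longer available: that isomorphism is the tangent-space description of the \emph{stratum} with fixed zero multiplicities, the relative cohomology group jumps in dimension when zeros merge, the labelled cover $\wt U\to U$ is branched rather than unramified there, and already in the local model $\omega=(w^2-a)\,dw$ one has $\phi\sim a^{3/2}$, so the differential of the relative period degenerates along the branch locus. So as written your proof establishes the statement only on the generic stratum, not ``in a neighborhood of any point of $\MH$''; the non-generic points require a separate argument, and they matter for the application, since the maxima taken in the proof of Diaz' theorem may well be attained there.
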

\begin{proof}
We outline the key step in the proof of the argument, given in full detail in \cite{kp1}. Suppose that the differentials of these functions are linearly dependent at some point $\G_0\in\MH$ (and thus the functions do not give local coordinates near $\G_0$). Then there exists a one-dimensional family $\G_t\subset\MH$, with complex parameter $t$ such that the derivative of any of the above functions along this family is equal to zero at $t=0$.

Choose locally a basis $\un{A},\un{B}$ for cycles, and let $\Omega_i(t)$ be the basis of holomorphic differentials on $\G_t$ dual to $A_i$. Denote then $F_i(p,t):=\int_{q_1(t)}^p\Omega_i(t)$ the corresponding abelian integral --- the function of $p\in\G_t$, depending on the choice of the path of integration. Let us also denote $f_t(p):=\int_{q_1(t)}^p\omega(t)$ the integral of our chosen meromorphic differential along the same path.

We will now want to see how $F_i$ varies in $t$. For this to make sense as a partial derivative, we need to ``fix'' the point $p$ as we vary $\G_t$, and to do this we will use $f$ as the local coordinate on the universal cover of $\G$. This is to say that we will fix $x:=f_t(p)$ and let $t$ vary; this is to say that $f$ allows us to define a connection on the space of abelian integrals.

Rigorously, we consider the derivative
\begin{equation}\label{partial}
 \frac{\p}{\p t}F_i(f_t^{-1}(x),t)|_{t=0}
\end{equation}
and show that it is zero. We think of the surface $\G_t$ as cut along a basis of cycles, so that the integrals $\int_{q_1(t)}^p$ in the definition above are taken along paths not intersecting this basis, i.e.~on the simply-connected cut surface. Then the expression above is by definition a differential on the cut surface $\G_0$, with possible poles at the zeros of $\omega$ (where $f^{-1}$ is singular), and with discontinuities along the cuts. However, if as we wary $t$ the coordinates $\alpha$ and $\beta$ do not change, i.e.~the periods of $\omega_t$ do not change, (\ref{partial}) has no discontinuity along the cut (the discontinuity of $F_i$ does not depend on $t$), and since the critical values $\phi$ do not change (which is implied by $\sigma(\phi)$ not changing as we vary $t$), (\ref{partial}) also has no poles at the critical values of $\omega$, as the singular part of $F_i(f_t^{-1}(x),t)$ there also does not depend on $t$. Thus expression (\ref{partial}) is an abelian differential on $\Gamma_0$ with zero $A$-periods; thus it is identically zero, and also has zero $B$-periods. Since the  $B$-periods of $\Omega_i$ are entries of the period matrix $\tau$ of $\G_0$, this means that we have
$$
 \frac{\p}{\p t}\tau_{ij} (t)|_{t=0}=0,
$$
The infinitesimal Torelli theorem says that the period map $\tau:\M_g\to\A_g$ induces an embedding on the tangent space away from the locus of the hyperelliptic curves (where the kernel of $d\tau$ is one-dimensional at the hyperelliptic curves), and thus the above is impossible unless $\G_0$ is a hyperelliptic curve. Further arguments complete the proof of the claim in the hyperelliptic case (one uses the explicit form of the degeneracy of the infinitesimal Torelli), we refer to the appendix of \cite{kp1} for details.
\end{proof}

\section{Differentials with real periods}
We now introduce the second main tool of the theory: differentials with real periods, or {\it real-normalized} abelian differentials, i.e.~differentials $\omega$ on a Riemann surface $\G$ such that all their periods are real.

\begin{df}
We denote by $\MR$ the space of curves with marked points, together with a real-normalized meromorphic differential with prescribed pole orders:
\begin{equation}\label{MrH}
 \MR:=\left\{(\G,\un{p},\omega)\in\MH\, \Big|\, \oint_\gamma \omega\in\bbR,\ \forall \gamma\in H_1(\G\setminus\{p_1,\ldots, p_n\},\bbZ)\right\}.
\end{equation}
Note that to be able to talk of all periods of a meromorphic differential being real, without choosing a basis for cycles, the integrals of $\omega$ around all poles need to be real, so all residues need to be imaginary. Thus we have $\rho_j:\MR\to i\bbR$.
\end{df}
Notice that the condition of the periods being real is a real-analytic and not a holomorphic condition. From now on our constructions will happen in the real-analytic category unless stated otherwise.
\begin{rem}
Because the periods of differentials are constant along the leaves of the foliation $\L$ on $\MH$, the foliation $\L$ restricts to a foliation on $\MR$ (i.e.~any leaf of $\L$ on $\MH$ intersecting $\MR$ is contained in $\MR$). Since $\MR$ is only a real-analytic orbifold, the foliation $\L$ on it is real-analytic, but each individual leaf carries the structure of a complex orbifold (recall that the smoothness of every leaf follows from theorem \ref{coords}). The functions $\sigma(\phi)$ give local holomorphic coordinates on the leaves of $\L$ on $\MR$.
\end{rem}

One of the strengths of the real normalization lies in the following
\begin{prop}\label{fglob}
When restricted to $\MR$, the tuple of imaginary parts of the functions $\phi$ given by (\ref{defphi}) can be defined globally: we can define global real-analytic maps
$$
  s_j:=s_j({\rm Im}\, \phi_2, \ldots, {\rm Im}\,\phi_{2g-2+\sum h_i}):\MR\to \bbR_{\ge 0}\qquad {\rm for}\ j=1\ldots 2g-3+\sum h_i.
$$
by taking the elementary symmetric polynomials of the imaginary parts ${\rm Im}\,\phi_j$ (notice that this is of course not the same as  ${\rm Im}\,\sigma_j$ defined by (\ref{defsigma})).
\end{prop}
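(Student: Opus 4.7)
The plan is to reduce the proposition to the following geometric fact: on $\MR$, the real one-form ${\rm Im}\,\omega$ is exact on $\G\setminus\{p_1,\ldots,p_n\}$, and so its antiderivative $F:={\rm Im}\int\omega$ is a globally defined single-valued harmonic function, unique up to an additive real constant. Once this is in place, the quantities ${\rm Im}\,\phi_j$ are interpreted as the differences $F(q_j)-F(q_1)$, and the proposition becomes a matter of choosing the additive constant canonically and invoking the real-analytic variation of the zero divisor of $\omega$.

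\emph{Step 1 (imaginary antiderivative is single-valued).} I would first check that ${\rm Im}\,\omega$ has zero period over every closed loop $\gamma\subset\G\setminus\{p_1,\ldots,p_n\}$. By the definition (\ref{MrH}) of $\MR$, $\oint_\gamma\omega\in\bbR$ for every $\gamma\in H_1(\G\setminus\{p_1,\ldots,p_n\},\bbZ)$; in particular this includes small loops around each puncture, forcing the residues $\rho_j$ to be purely imaginary (as already noted after (\ref{MrH})). Since ${\rm Im}\,\omega$ is also closed (being the imaginary part of a meromorphic, hence locally holomorphic, differential away from its poles), de Rham's theorem gives ${\rm Im}\,\omega=dF$ for a harmonic function $F:\G\setminus\{p_1,\ldots,p_n\}\to\bbR$, uniquely determined up to an additive real constant.

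\emph{Step 2 (identification of ${\rm Im}\,\phi_j$).} Because $F$ is now single-valued, the integral $\phi_j=\int_{q_1}^{q_j}\omega$ from (\ref{defphi}) satisfies ${\rm Im}\,\phi_j=F(q_j)-F(q_1)$, independently of the path of integration (any ambiguity is absorbed into a real period). The critical points of $F$ are precisely the zeros $q_1,\ldots,q_N$ of $\omega$ (with $N=2g-2+\sum h_i$), so the tuple $({\rm Im}\,\phi_2,\ldots,{\rm Im}\,\phi_N)$ is an unordered multiset of real numbers, intrinsically determined up to the choice of the base point $q_1$ (changing the base point shifts all entries by a common constant).

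\emph{Step 3 (canonical normalization).} I fix the remaining ambiguity by choosing $q_1$ to be a critical point at which $F$ attains its minimum over $\{q_1,\ldots,q_N\}$; equivalently, I pin down the additive constant in $F$ by imposing $\min_s F(q_s)=0$. Then ${\rm Im}\,\phi_j\geq 0$ for every $j$, and the multiset $\{{\rm Im}\,\phi_2,\ldots,{\rm Im}\,\phi_N\}$ of non-negative reals is completely intrinsic: different choices of minimizer label give the same multiset. Defining $s_j$ as the $j$-th elementary symmetric polynomial of this multiset then produces functions $\MR\to\bbR_{\geq0}$, globally and without any arbitrary choice.

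\emph{Step 4 (real-analyticity).} The divisor $(\omega)\in\Sym^N\G$ varies holomorphically, hence real-analytically, with the underlying data on $\MR$. Composing with the real-analytic evaluation $F(\cdot)$ (which depends real-analytically on the data once the normalization $\min_s F(q_s)=0$ is imposed), one obtains a real-analytic map into $\Sym^N\bbR$ whose elementary symmetric polynomials are the $s_j$. Where the minimum is attained at a single critical point, this is immediate. The delicate case — and what I expect to be the \emph{main obstacle} — is the behavior across the locus where the minimum of $F$ is attained at two or more critical points simultaneously, since the individual minimizer does not vary real-analytically there; I would handle this by showing that the specific symmetric combinations entering $s_j$ remain real-analytic across this coincidence locus, using power-sum identities and the fact that only the full multiset (not the labeling of the minimizer) enters the definition.
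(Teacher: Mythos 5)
Your Steps 1--3 are exactly the paper's argument: the reality of all periods makes ${\rm Im}\int_{p_0}^{q_j}\omega$ path-independent, the base point $q_1$ is chosen as the minimizer among the critical values to kill the additive ambiguity and force non-negativity, and one then passes to elementary symmetric functions of the resulting multiset. So on the substance of the proposition your proposal matches the paper's proof.

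The one place you go beyond the paper is Step 4, and there your worry is legitimate but your proposed fix does not work. The obstruction is not the labeling of the minimizer but the normalizing constant itself: writing $v_s={\rm Im}\int_{p_0}^{q_s}\omega$ and $m=\min_s v_s$, the multiset entering $s_j$ is $\{v_s-m\}$ with one zero deleted, and $m$ fails to be real-analytic where the minimizing zero changes. Power-sum identities cannot remove $m$, since every symmetric function of the shifted values still contains it; already with two zeros one gets $s_1=|v_1-v_2|$, which is continuous and piecewise real-analytic but not real-analytic at the crossing. The paper asserts real-analyticity in the statement but never addresses this locus either, and in the subsequent proof of Diaz' theorem it only uses continuity of the ordered values $f_1\ge\cdots\ge f_{2g-1}$ (explicitly conceding they may fail to be smooth where two coincide) together with harmonicity of the individual ${\rm Im}\,\phi_i$ on leaves, so nothing downstream is affected. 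You should either weaken ``real-analytic'' to ``continuous and piecewise real-analytic'' in Step 4, or restrict the analyticity claim to the open locus where the minimum is attained at a unique zero.
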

\begin{proof}
For a real-normalized differential $\omega$, for any point $p_0\in\G\setminus\{\un{p},\un{q}\}$ (where $q$ are the zeroes of $\omega$, as above) the imaginary part of the integral $\int_{p_0}^{q_j}\omega$ is independent of the choice of the path of integration --- going around any cycle adds a real number to the integral. We now choose $q_1$ to be a zero of $\omega$ for which $\int_{p_0}^{q_1}\omega$ is minimal, and get a well-defined collection of the non-negative imaginary parts of critical values (at $q_i$ minus at $q_1$), of which we then take the elementary symmetric functions.
\end{proof}

The power of the real-normalization is in the uniqueness of a real-normalized differential with prescribed singular parts at the marked points (written out in terms of the jets of local coordinates at the marked points). Thus the real-normalized differentials provide a section of the bundle of meromorphic differentials with prescribed pole orders over the moduli space of curves with marked points, endowed with jets of local coordinates at these points.

\begin{prop}\label{Psi}
For any $(\G,p_1,\ldots,p_n)\in\M_{g,n}$, any set of positive integers $h_1,\ldots,h_n$, and any choice of $h_i$-jets of local coordinates $z_i$ in the neighborhood of marked points $p_i$, with $z_i(p_i)$ and any singular parts (i.e.~for $i=1\ldots n$ the choice of Taylor coefficients $c_i^1,\ldots, c_i^{h_i}$, with $\sum c_i^1=0$) there exists a unique real-normalized differential $\Psi$ on $\Gamma$ with prescribed singular parts, i.e.~such that in a neighborhood $U_i$ of each $p_i$ we have
$$
 \Psi|_{U_i}=\sum\limits_{j=1}^{h_i} c_i^j\frac{dz}{z^j} +O(1)
$$
(note that in the integrable systems literature $\Psi$ is often denoted $dE$ and coordinate $k_i=z_i^{-1}$ with $k_i(p_i)=\infty$ is used).
\end{prop}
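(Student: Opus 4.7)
My approach is to treat uniqueness and existence separately, with both ultimately reducing to the classical Riemann bilinear identity for holomorphic differentials.

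For uniqueness, suppose $\Psi_1$ and $\Psi_2$ are two real-normalized differentials on $\G$ with the same prescribed singular parts. Their difference $\eta:=\Psi_1-\Psi_2$ is then a holomorphic abelian differential all of whose $A$- and $B$-periods are real. Applying Stokes' theorem to $\eta\wedge\bar\eta$ on a polygonal model of $\G$ cut along a symplectic basis of cycles yields the Riemann bilinear identity
\[
 \int_\G\eta\wedge\bar\eta=\sum_{i=1}^g\bigl(A_i(\eta)\overline{B_i(\eta)}-B_i(\eta)\overline{A_i(\eta)}\bigr),
\]
whose right-hand side vanishes when all periods are real. But writing $\eta=f(z)\,dz$ in a local coordinate gives $\tfrac{i}{2}\int_\G\eta\wedge\bar\eta=\int_\G|f|^2\,dx\,dy\ge 0$, with equality only when $\eta\equiv 0$. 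Hence $\Psi_1=\Psi_2$.

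For existence, I would first invoke the classical theorem (via Riemann--Roch, or via local parametrices and Hodge theory) that, given the compatibility condition $\sum c_i^1=0$, there exists at least one meromorphic differential $\omega_0$ on $\G$ with exactly the prescribed singular parts at the $p_i$; its periods $A_i(\omega_0),B_i(\omega_0)\in\bbC$ are however not real in general. I would then seek a holomorphic correction $\eta\in H^0(K_\G)$ so that $\Psi:=\omega_0+\eta$ is real-normalized, which amounts to solving $\operatorname{Im}A_i(\eta)=-\operatorname{Im}A_i(\omega_0)$ and $\operatorname{Im}B_i(\eta)=-\operatorname{Im}B_i(\omega_0)$ for every $i$. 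Such an $\eta$ exists by a dimension count: the real-linear map
\[
 T:H^0(K_\G)\longrightarrow\bbR^{2g},\qquad \eta\longmapsto\bigl(\operatorname{Im}A_i(\eta),\operatorname{Im}B_i(\eta)\bigr)_{i=1}^g,
\]
has domain and target both of real dimension $2g$, and its kernel is precisely the space of holomorphic differentials with all real periods, which is zero by the uniqueness argument just given. Hence $T$ is an $\bbR$-linear isomorphism, the required $\eta$ exists, and $\omega_0+\eta$ is the desired real-normalized differential.

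The only substantive analytic input is the vanishing lemma for holomorphic differentials with all real periods; after that, uniqueness is immediate and existence is linear algebra plus the standard existence theorem for meromorphic differentials with prescribed principal parts. I do not anticipate a genuinely hard step, though one must remember the implicit hypothesis that each $c_i^1$ lies in $i\bbR$: this is forced by the requirement of real periods (the residue theorem gives $\oint_{|z_i|=\epsilon}\Psi=2\pi i\,c_i^1\in\bbR$) and should be read as part of the data.
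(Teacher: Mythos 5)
Your proof is correct and follows the same overall strategy as the paper --- uniqueness by showing a holomorphic differential with all real periods vanishes, existence by Riemann--Roch plus a holomorphic correction --- but both halves are executed differently. For uniqueness the paper notes that $\Omega-\bar\Omega$ has all periods zero, hence vanishes in $H^1(\G)$, and concludes $\Omega=\bar\Omega=0$ from the Hodge decomposition, whereas you use the Riemann bilinear identity and the positivity of $\tfrac{i}{2}\int_\G\eta\wedge\bar\eta$; these are equivalent in depth, and both are standard. For existence the paper gives an explicit two-step construction: first subtract $\sum a_i\Omega_i$ to kill the $A$-periods, then use the non-degeneracy of $\operatorname{Im}\tau$ to solve $b=(\operatorname{Im}\tau)c$ for a real correction vector $c$; your argument instead observes that the $\bbR$-linear period map $T:H^0(K_\G)\to\bbR^{2g}$ is injective by the uniqueness lemma and hence bijective by dimension count. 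Your route is slightly slicker in that the non-degeneracy of $\operatorname{Im}\tau$ is absorbed into the vanishing lemma rather than invoked separately, at the cost of being less constructive. You are also right to flag the implicit hypothesis $c_i^1\in i\bbR$: the paper imposes it only in the surrounding definition of $\MR$ (residues must be imaginary for the periods around the poles to be real), not in the statement of the proposition itself.
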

\begin{proof}
Indeed suppose there were two such differentials. Subtracting one from the other would then yield a holomorphic differential $\Omega$ with all periods real. Then all the periods of the difference $\Omega-\bar{\Omega}$ must be zero, and thus we must have $\Omega-\bar\Omega=0\in H^1(X)$. Since $\Omega$ is holomorphic and $\bar\Omega$ is antiholomorphic, this implies $\Omega=\bar\Omega=0$.

From Riemann-Roch theorem it follows that there must exist a differential $\omega$ with the prescribed singular part. Let $a_1,\ldots,a_g$ be its periods over the $A$ cycles, and let $\Omega_1\ldots\Omega_g$ be the basis of holomorphic differentials dual to the $A$ cycles. The differential $\omega':=\omega-\sum a_i\Omega_i$ then has all $A$-periods zero (and thus in particular real). We now need to show that there exists a differential $\Psi=\omega'-\sum c_i\Omega_i$, for some $c_i\in\R$, with all $B$-periods real --- its $A$-periods are equal to $c_i$. Indeed, let $b_1,\ldots,b_g$ be the imaginary parts of the $B$-periods of $\omega'$. Since the imaginary part of the period matrix $\tau$ of $X$ is non-degenerate, there must exist a vector $c\in\R^g$ such that $b=({\rm Im}\,\tau)c$, and this is our solution.
\end{proof}

\section{A foliation of $\M_{g,2}$, and Diaz' theorem}
In what follows we will concentrate on meromorphic differentials with a single double pole (and thus with no residue at the marked point) --- traditionally called second kind --- i.e.~sections of $K_X+2p$ over $\M_{g,1}$, and meromorphic differentials with two simple poles with opposite residues --- traditionally called third kind --- sections of $K_X+p_1+p_2$ over $\M_{g,2}$.

\begin{df}
For the case of a differential of the second kind the singular part is equal to $r z^{-2} dz$ for some $r\in\bbC$, where $z$ is the local coordinate near $p$ with $z(p)=0$. If the local coordinate $z$ is changed, $r$ transforms as a tangent vector, and thus a Riemann surface with a differential of a second kind determines a point in the moduli space $\M_{g,1}$ together with a tangent vector at $p$. We will denote by $\Mh$ this space: the total space of the universal tangent bundle to $\M_{g,1}$ at the marked point.
\end{df}
\begin{rem}
Notice that such a differential of the second kind has no residue, and thus we can talk about real normalization. If such a differential of the second kind were exact, it would be the derivative of a meromorphic function with a single simple pole, in which case the Riemann surface would be $\bbC\bbP^1$. In \cite{elsv} it is explained why the tangent vectors at marked points appear in the context of Hurwitz spaces.
\end{rem}

For a differential of the third kind the singular part is determined by the residue $r\in\bbC$ at $p_1$ (the residue at $p_2$ is then $-r$). To be able to talk about real normalization, we need to require this residue to be imaginary, and then all such real-normalized differentials of the third kind are $\bbR$-multiples of each other. We fix the residue to be $i$ then (getting a section of the $\bbR$-line bundle $\M_{g,2}^real(1,1)\to\M_{g,2}$), and thus the foliation $\L$, along the leaves of which the residue is constant, induces a real-analytic foliation on $\M_{g,2}$. Each leaf of this foliation itself carries a complex structure, compatible with the complex structure on $\M_{g,2}$, and is of complex codimension $g$ in $\M_{g,2}$.

\begin{rem}
Real-normalized differentials of the third kind
probably had been known to Riemann in his study of electric potential created by two
charged particle on a surface. In modern literature they were used in \cite{wolpert}
to study triangulations of moduli space of curves with marked points in connection with light-cone string theory,
and in \cite{fourie} in a construction of an analog of Fourier-Laurent theory on Riemann
surfaces.
\end{rem}

The fiber of the forgetful map $\M_{g,2}\to\M_g$ over the point $[\G]$ is $\G\times\G\setminus{\rm diagonal}$, and thus non-compact. We define a partial compactification $\wt{\M}_{g,2}$ of $\M_{g,2}$ by allowing the two marked points to collide, so that the fiber of the map $\wt{\M}_{g,2}\to\M_g$, equal to $\G\times\G$, is compact. From the point of view of the Deligne-Mumford compactification, if the two marked points coincide, we attach a nodal $\mathbb{CP}^1$ at this point.

In the next section we formally study the degenerations of the real-normalized differentials of the second kind. For the case of the differential of the third kind the situation with this degeneration is clear: the bundle $K_\G+p_1+p_2$ extends to the boundary of $\overline{\M_{g,n}}$ as a line bundle $\omega_\G+p_1+p_2$, where $\omega_\G$ is the relative dualizing sheaf of a stable curve. Thus the limit of $\Psi$ is a differential with possible poles at the node and at the marked points on $\bbC\bbP^1$. However, as the node on the original curve is separating, and is the only marked point on the genus $g$ component of the stable curve, there can be no residue at it. Thus in this limit the real-normalized differential $\Psi$ becomes holomorphic on $\G$, and will have residues at the two other marked points of the $\mathbb{CP}^1$.
Since in the limit $\Psi$ is still real-normalized, in the limit ot becomes identically zero on $\Gamma$, and the associated functions $\sigma,\alpha,\beta$ all become zero (they no longer give coordinates near such degenerate points).

We now use this foliation on $\wt{M}_{g,2}$ and the local coordinates on it to prove Diaz' theorem.
\begin{theo}[Diaz \cite{diaz}]
There do not exist complete complex subvarieties of $\M_g$ of complex dimension greater than $g-2$.
\end{theo}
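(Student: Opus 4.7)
The plan is to lift $X$ to a compact complex subvariety $\wt X$ of $\wt\M_{g,2}$, intersect it with a single leaf of the foliation $\L$, and derive a contradiction via the maximum principle for holomorphic functions on compact complex analytic subvarieties.

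Assume for contradiction that $\dim_\bbC X \geq g-1$. Pulling $X$ back along the forgetful map $\wt\M_{g,2}\to\M_g$ (whose fibers $\G\times\G$ are compact) produces a complete complex subvariety $\wt X\subset\wt\M_{g,2}$ of complex dimension $\dim_\bbC X+2\geq g+1$. Via the real-normalized section of Proposition \ref{Psi} with residues $\pm i$, I regard $\wt X$ as sitting inside $\MR$. I choose a non-degenerate point $p^*\in\wt X$ (with $p_1\neq p_2$), which exists because the two-dimensional fibers $\G\times\G$ of $\wt X\to X$ cannot lie inside the one-dimensional diagonal; let $L$ be the leaf of $\L$ through $p^*$, a complex submanifold of $\wt\M_{g,2}$ of complex codimension $g$.

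By complex-analytic intersection theory in $\wt\M_{g,2}$, near $p^*$ one has
\[
  \dim_\bbC(\wt X\cap L)\;\geq\;(\dim_\bbC X+2)+(2g-1)-(3g-1)\;=\;\dim_\bbC X+2-g\;\geq\;1.
\]
Using the global structure of $L$ as a connected component of the closed level set of the (multi-valued) period map on $\MR$, one argues that $\wt X\cap L$ is closed in the compact $\wt X$, hence compact; pick a compact irreducible complex analytic component $C\subset\wt X\cap L$ through $p^*$ with $\dim_\bbC C\geq 1$. By Theorem \ref{coords}, the local holomorphic coordinates on $L$ are the elementary symmetric polynomials $\sigma_k$ of the critical values $\phi_j=\int_{q_1}^{q_j}\Psi$ of the real-normalized differential $\Psi$. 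Each $\phi_j$, after locally labeling the zeros of $\Psi$, is a local holomorphic function on $L$ and hence restricts to a holomorphic function on the compact complex analytic variety $C$. The maximum principle forces each $\phi_j|_C$, and thus each local coordinate $\sigma_k|_C$, to be locally constant. Since the $\sigma_k$'s are local coordinates on $L$, this forces $C$ to be zero-dimensional, contradicting $\dim_\bbC C\geq 1$. One concludes that complete complex subvarieties of $\wt\M_{g,2}$ have complex dimension at most $g$; projecting back along the forgetful map with two-dimensional fibers then gives $\dim_\bbC X\leq g-2$.

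The chief technical obstacle is the rigorous verification of compactness of $\wt X\cap L$, since leaves of a real-analytic foliation need not be embedded as closed subsets of $\MR$. A secondary concern is the possible global monodromy in the labeling of the zeros $q_j$ on $C$, which would obstruct viewing the $\phi_j$'s as single-valued holomorphic functions on $C$; this is bypassed by working only with the permutation-invariant symmetric polynomials $\sigma_k$, or equivalently with the globally defined real-analytic functions $s_k$ from Proposition \ref{fglob}.
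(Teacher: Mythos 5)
Your overall architecture --- lift to $\wt\M_{g,2}$, intersect with a codimension-$g$ leaf of $\L$, and kill the positive-dimensional intersection with a maximum principle --- is the same as the paper's, and your dimension count $\dim_\bbC(\wt X\cap L)\ge 1$ is correct. But the step you yourself flag as ``the chief technical obstacle'' is a genuine gap, not a technicality: $\wt X\cap L$ need not be compact, and your proposed justification fails. A leaf of $\L$ is a level set of the periods only on the Teichm\"uller-level cover $\T_{g,2}(\un h)$; downstairs the mapping class group acts on the period vector $(\un a,\un b)\in\bbR^{2g}$ through $Sp(2g,\bbZ)$, and such orbits are generically \emph{not} discrete (already for $SL(2,\bbZ)$ acting on $\bbR^2$, orbits of vectors with irrational slope are dense). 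Hence the union of leaves over an orbit need not be closed, individual leaves need not be closed in $\MR$, and the intersection of a leaf with the compact $\wt X$ need not be compact. Without compactness of $C$ your appeal to the maximum principle on $C$ collapses. There is also a secondary problem you underestimate: even on a single leaf, $\phi_j=\int_{q_1}^{q_j}\Psi$ is multivalued because changing the path adds a period; real-normalization makes only ${\rm Im}\,\phi_j$ single-valued. So neither $\phi_j$ nor $\sigma_k$ is a well-defined holomorphic function on $C$, and the maximum \emph{modulus} principle cannot be applied to them; only the harmonic functions ${\rm Im}\,\phi_j$ (or their ordered/symmetrized versions) are globally available.

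The paper's proof is engineered precisely to avoid both issues. It never uses compactness of $L\cap Z$. Instead it takes the globally defined continuous functions on the compact $Z$ obtained by ordering the imaginary parts, $f_1\ge\cdots\ge f_{2g-1}\ge 0$, lets $Z_1\subset Z$ be the (compact) locus where $f_1$ attains its maximum (shown to be positive, hence inside $\M_{g,2}$), and uses the maximum principle for the \emph{harmonic} functions ${\rm Im}\,\phi_j$ restricted to a component $(L\cap Z)^0$ to show that $Z_1$ is saturated by such components. Iterating with $f_2,\ldots,f_{2g-1}$ produces $Z_{2g-1}\subset\cdots\subset Z_1$ on which all ${\rm Im}\,\phi_j$ are constant along $(L\cap Z)^0$; constancy of the imaginary part of a holomorphic function on a connected complex variety then forces the $\phi_j$, hence all the local coordinates of Theorem \ref{coords}, to be constant, so $(L\cap Z)^0$ is zero-dimensional --- contradiction. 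To repair your argument you would need to replace the single compact intersection $C$ by this iterated extremal-locus construction (or otherwise prove closedness of the relevant leaf, which is false in general).
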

\begin{proof}
Suppose for contradiction $Y\subset\M_g$ were a complete complex submanifold of $\M_g$ with $\dim_\bbC Y\ge g-1$. Consider the preimage $Z\subset\wt{\M}_{g,2}$ of $Y$; it would then be a complete complex submanifold with $\dim_\bbC Z\ge g+1$ (notice that here we need the fact that the fibers of $\wt{\M}_{g,2}$ are complete, which is why we could not use $\M_{g,2}$ in the first place).

By proposition \ref{fglob} we have the globally defined real-analytic functions $s_j:\wt{\M}_{g,2}\to \bbR_{\ge 0}$. Let us now arrange the imaginary parts of the critical values ${\rm Im}\,\phi_j$ into functions $f_1\ge\ldots \ge f_{2g-1}\ge 0$, so that each $f_i$ is  piecewise real analytic (they may not be smooth where two of them coincide) and continuous. Then $f_1$ must achieve a maximum on $Z$, as a continuous function on a compact set. We will want to use local coordinates on $\M_{g,2}$ given by theorem \ref{coords}, and thus need to avoid working on $\wt{\M}_{g,2}\setminus\M_{g,2}$, where we do not have local coordinates. We start by proving the following
\begin{lem}
The maximum of $f_1$ on $Z$ is strictly greater then zero (and thus is achieved on $\M_{g,2}$).
\end{lem}
\begin{proof}
Indeed, if the maximum of $f_1$ were zero, $f_1$, and thus all $f_i$ and all $s_j$ would be identically zero on $Z$. Take then any point $(\Gamma,p_1\ne p_2)\in Z\cap\M_{g,2}$ (this is non-empty, as $Z$ is a preimage of $Y\subset\M_g$) and consider a leaf $L$ containing it. Then in a neighborhood of $(\G,p_1,p_2)$ all functions ${\rm Im}\, \phi_j$ would be identically zero along $L\cap Z$. Since these are imaginary parts of local holomorphic functions, the local holomorphic functions $\phi_j$ would be constant along $L\cap Z$, while all $\alpha$ and $\beta$ are constant on $L$. Thus the values of $\sigma(\phi),\alpha,\beta$ would all be constant along $L\cap Z$ locally near $(\Gamma,p_1,p_2)$. Since by theorem \ref{coords} these functions are local coordinates on $\M_{g,2}$ , this would imply that $(\Gamma,p_1,p_2)$ is an isolated point of $L\cap Z$. However, since $\operatorname{codim}_\bbC L=g$, and we assumed $\dim_\bbC Z\ge g+1$, we have $\dim_\bbC (L\cap Z)\ge 1$, and thus there is a contradiction.
\end{proof}

The proof of the theorem follows the same line of thought, but an inductive argument is needed. Indeed, we let  $Z_1$ be the locus of points in $Z$ where $f_1$ achieves its maximum. Since this maximum is non-zero, $Z_1$ is a closed subvariety of $Z\cap\M_{g,2}$ (and thus we know that we have local coordinates at any point of $Z_1$). We claim that $Z_1$ is foliated by (the connected components of) leaves of $\L|_Z$, i.e.~that if any component $(L\cap Z)^0$ of $L\cap Z$ contains a point of $Z_1$, then $(L\cap Z)^0\subset (L\cap Z_1)$. Note that $Z_1\subset Z$ is by definition closed and thus compact.

Indeed, take some $(\Gamma,p_1,p_2)\in Z_1$ and consider the leaf $L$ containing $(\Gamma,p_1,p_2)$. Since the complex codimension of $L$ is equal to $g$, and by assumption the complex dimension of $Z$ is greater than $g$, we have $\dim_\bbC(L\cap Z)\ge 1$ (for all components of the intersection). By theorem \ref{coords} we have local holomorphic coordinates $\un{\alpha},\un{\beta},\un{\sigma}$ in a neighborhood of $(\Gamma,p_1,p_2)$ in $\M_{g,2}$, and  $\un{\sigma}$ are local coordinates on the leaf $L$. The function $f_1|_{L\cap Z}$ is the maximum of $\lbrace {\rm Im}\,\phi_i\rbrace$ for all $i$, and thus if $f_1$ achieves a maximum at $(\Gamma,p_1,p_2)\in (L\cap Z)^0$, one of the ${\rm Im}\,\phi_i$ must also achieve a maximum. However, ${\rm Im}\,\phi_i$ is the imaginary part of a holomorphic function on the complex manifold $L$, and thus it is a harmonic function. By the maximum principle, if ${\rm Im}\,\phi_i$ achieves a maximum at an interior point of $(L\cap Z)^0$ (which is not zero-dimensional), then it is constant along $(L\cap Z)^0$. This implies that the value of $f_1$ at all points of $(L\cap Z)^0$ is the same as that at $(\Gamma,p_1,p_2)$, which is the maximum of $f_1$ on $Z$, and thus by definition $(L\cap Z)^0\subset Z_1$.

We now consider the function $f_2|_{Z_1}$, and let $Z_2\subset Z_1$ be the set where it attains its maximum on the compact manifold $Z_1$. We claim that $Z_2$ is still foliated by the leaves of $L\cap Z$, i.e.~that if some $(L\cap Z)^0$ contains $(\Gamma,p_1,p_2)\in Z_2$, then $(L\cap Z)^0\subset Z_2$. To prove this, we use the same argument as above: indeed, $f_2$ is the second-maximum value among the tuple of functions ${\rm Im}\,\phi_i$, and thus if it attains a maximum at some point, one of the functions ${\rm Im}\,\phi_i$ must have a local maximum at this point. Restricting ${\rm Im}\,\phi$ to $(L\cap Z_1)^0$, which we inductively know is equal to the complex manifold $(L\cap Z)^0$, gives a harmonic function, which cannot have a local maximum unless it is constant, and thus the value of $f_2$ is constant along $(L\cap Z_1)^0=(L\cap Z)^0$, so that $(L\cap Z_1)^0\subset Z_2$ by definition.

Repeating this procedure, we get compact real subvarieties $Z_{2g-1}\subset\ldots\subset Z_1\subset Z$ such that for any leaf $L$ containing some $(\Gamma,p_1,p_2)\in Z_{2g-1}$ we still have $(L\cap Z_{2g-1})^0=(L\cap Z)^0$. Let us now consider the local coordinates $\un{\alpha},\un{\beta},\un{\phi}$ near $(\Gamma,p_1,p_2)$. The coordinates $\un{\alpha},\un{\beta}$ are constant on the leaf $L$, while by construction all of $f_i$, and thus all of ${\rm Im}\, \phi_i$, are constant along $Z_{2g-1}$ (achieve their respective maxima everywhere). Since $(L\cap Z)^0$ is a complex variety, if the imaginary part of a (local) holomorphic function on it is constant, the holomorphic function itself is constant. This means that all $\phi_i$, as well as are constant along $(L\cap Z)^0$, and since $\rho,\alpha,\beta$ are by definition constant on the leaf, it means that all the local coordinates given by theorem \ref{coords} are constant along $(L\cap Z)^0$, which implies that $(L\cap Z)^0$ is zero-dimensional.
\end{proof}

Another interesting space to consider is $\M_g^{ct}$, the moduli space of stable curves of compact type, i.e.~those stable curves where the Jacobian is compact; equivalently, this corresponds to pinching a number of separating (homologous to zero) loops on a Riemann surface. In \cite{fp} Faber and Pandharipande further study the vanishing properties of the tautological classes, for $\M_g$ and for the partial compactification $\M_g^{ct}$. They relate the tautological classes on $\overline{\M_g}$ and on the boundary, and use this to prove the vanishing results for tautological rings of both $\M_g$ and $\M_g^{ct}$. In particular their results imply that there do not exist complete subvarieties of $\M_g^{ct}$ of dimension higher than $2g-3$. In fact a stronger result is true:
\begin{prop}[Keel and Sadun \cite{kesa}]
For $g\ge 3$ there do not exist complete complex subvarieties of $\M_g^{ct}$ of dimension greater than $2g-4$.
\end{prop}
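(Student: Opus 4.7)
The natural plan, continuing the methods above, is to adapt the proof of Diaz' theorem to the compact-type partial compactification. The starting observation is that on any stable curve of compact type the Jacobian is still a product of Jacobians of smooth components, hence compact with nondegenerate imaginary period matrix; thus Proposition \ref{Psi} extends componentwise, yielding on every such curve a unique real-normalized third-kind differential $\Psi$ with prescribed (opposite) imaginary residues at the two marked points and matching residues at the separating nodes. Consequently both $\Psi$ and the globally defined real-analytic functions $s_j$ of Proposition \ref{fglob} should extend continuously to a partial compactification $\wt{\M}_{g,2}^{ct}$ which allows both the collision of the two marked points and all separating nodal degenerations of the underlying curve, and the foliation $\L$ extends correspondingly.

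Assuming for contradiction that $Y \subset \M_g^{ct}$ is a complete complex subvariety with $\dim_\bbC Y \geq 2g-3$, I would lift it to $Z \subset \wt{\M}_{g,2}^{ct}$ of complex dimension $\geq 2g-1$ and run the iterated maximum-principle argument of the Diaz proof. In the interior, at each step the harmonicity of ${\rm Im}\,\phi_j$ along the leaves of $\L$ either forces constancy along the positive-dimensional intersection $(L \cap Z)^0$ and we pass to the next $Z_j$, or the argument of Theorem \ref{coords} produces a contradiction directly. The crucial modification required for $\M_g^{ct}$ is to handle the case when the iterated maximization pushes the maximum locus onto the boundary: on a separating stratum $\G = \G_1 \cup_q \G_2$ the differential $\Psi$ decomposes as $\Psi = \Psi_1 \oplus \Psi_2$ into real-normalized third-kind differentials on the two components, each a meromorphic differential of the type to which Theorem \ref{coords} applies, so the coordinate system refines accordingly and one can invoke the statement recursively on $\M_{g_i}^{ct}$ with $g_1 + g_2 = g$.

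The main obstacle, and the reason one must settle for the bound $2g-4$ rather than a naive Diaz-style bound of $g-2$, lies in the behavior of the coordinates and foliation across the boundary. Although the dimension count $\dim (L \cap Z) \geq (2g-1)+(2g-1)-(3g-1) = g-1$ is positive for $g \geq 3$ and would superficially suggest the stronger Diaz bound, the local holomorphic coordinates of Theorem \ref{coords} degenerate on the boundary strata: as the curve acquires a separating node some of the critical values $\phi_j$ cease to be independent and some of their differentials become tangent to the boundary, so the iterated maximization can terminate in a boundary stratum before exhausting all $2g-1$ functions. A careful bookkeeping of which coordinates survive across each boundary stratum, combined with an induction on genus whose base case $g=3$ is handled by combining the geometry of $\Delta_1^{ct}\subset \M_3^{ct}$ with Diaz' theorem applied to the genus $2$ component, is what would give exactly the $2g-4$ bound of Keel and Sadun. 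The most delicate point is proving that, on each boundary stratum of $\wt{\M}_{g,2}^{ct}$, the refined set of local holomorphic coordinates provided by splitting $\Psi$ into components together with the refined maximum-principle argument recovers enough independent functions to close the induction but not more.
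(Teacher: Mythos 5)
Your proposal diverges completely from the argument the paper actually gives, and in its present form it is not a proof: the step that is supposed to produce the number $2g-4$ --- the ``careful bookkeeping of which coordinates survive across each boundary stratum'' --- is precisely the content of the proposition, and you never carry it out. You acknowledge that the naive transplant of the Diaz argument would give the (false) bound $g-2$ on $\M_g^{ct}$, so the entire burden of the proof is to quantify how the machinery degenerates at the boundary, and no mechanism is offered for why that degeneration costs exactly $g-2$ further dimensions. Moreover, the foundational claim that $\Psi$ and the functions $s_j$ ``extend continuously'' to the compact-type boundary is contradicted by the paper itself: the final remark of Section 5 states explicitly that the extension theorem \emph{fails} for differentials of the third kind, because when $p_1$ and $p_2$ end up on different components the limit acquires simple poles with residues $\pm k$ at the nodes. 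So the foliation $\L$ and the coordinate functions of Theorem \ref{coords} do not extend to $\wt{\M}_{g,2}^{ct}$ in the way your argument requires, and the maximum-principle iteration cannot be run across the boundary as written (on a component containing only one of the two marked points one controls $f$ from only one side).

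The paper's route is entirely different and much more elementary: it is an induction on $g$ with base case $g=3$, where Keel--Sadun show via $\A_3$ that $\M_3^{ct}$ contains no complete threefold. For the inductive step, a complete $X\subset\M_g^{ct}$ either lies in $\M_g$, in which case Diaz' theorem gives $\dim X\le g-2\le 2g-4$, or meets the boundary, in which case $\dim(X\cap\delta_i)=\dim X-1$ for some $\delta_i=\M_{i,1}^{ct}\times\M_{g-i,1}^{ct}$, and one applies the inductive bounds to the two factors (with an extra observation for $g=4$ using the fact that every complete curve in $\M_2^{ct}$ meets the boundary). None of the real-normalized differential machinery is used for this proposition; it enters only through Diaz' theorem as a black box. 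If you want to pursue your approach, the honest statement of what remains to be proved is: a degeneration analysis of the coordinates $\alpha,\beta,\sigma(\phi)$ along compact-type boundary strata sharp enough to bound the dimension of the maximum locus, and that is an open-ended project rather than a finishing touch.
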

\begin{proof}[Idea of the proof from \cite{kesa}]
One uses induction in $g$, the case of $g=3$, when $\M_3^{ct}\to\A_3$ was shown in \cite{kesa} not to contain a threefold, being the base of induction. Suppose  $X\subset\M_g^{ct}$ is a complete subvariety. If $X\subset\M_g$, then by Diaz' theorem its dimension is at most $g-2$, and we are done. Otherwise $X$ must intersect the boundary, and we must have $\dim (X\cap\p\M_g^{ct})=\dim X-1$. Thus there must exist a component $\delta_i=\M_{i,1}^{ct}\times\M_{g-i,1}^{ct}\subset \p\M_g^{ct}$ such that $\dim (X\cap\delta_i)=\dim X-1$. One now uses the inductive bound for the dimension of complete subvarieties of $\M_{i,1}^{ct}$ and $\M_{g-i,1}^{ct}$, and finally observes that since any complete curve in $\M_2^{ct}$ must intersect the boundary, for $g=4$ the hypothetical $X$ would have to intersect not only $\delta_2$, but also $\delta_1$, which gives an improved bound in this case.
\end{proof}

\section{Extension to the boundary}
The line bundle of meromorphic differentials with prescribed pole orders, i.e.~the bundle with fiber $K_\G+\sum c_i p_i$ over a smooth curve, extends to a bundle globally over the Deligne-Mumford compactification $\oM_{g,n}$ --- the fiber over a stable curve $\G$ is $\omega_\G+\sum c_i p_i$, where $\omega_\G$ is the relative dualizing sheaf.
In general if one takes a family of meromorphic differentials on
smooth Riemann surfaces (i.e.~takes a section over $\M_{g,n}$), we
expect that the limit may have simple poles at the nodes. Moreover,
the theory of limit linear series on reducible curves is extremely
complicated, see for example \cite{es-me}, and to determine all possible limits of sections on reducible nodal curves, one may need to twist the bundle by some multiples of the connected components of the nodal curve. We claim that this does not happen for the differentials of the second kind with real periods.
\begin{theo}
The real analytic section $\Psi$ over $\Mh$ of the bundle of meromorphic differentials with one double pole and prescribed singular part extends to a {\it continuous} section of the extension of this bundle, $\omega_\G+2p$ over $\oM_{g,\hat 1}$. For a stable curve $(\G,q,k)$ the section $\Psi_\G$ is the unique meromorphic differential that is identically zero on all connected components of the normalization $\tilde \G$ (geometrically $\tilde \G$ is obtained from $\G$ by detaching the attached nodes) except the one containing $q$. On that component $\Psi_\G$ is the unique differential with real periods and prescribed singular part at the double pole at $q$.
\end{theo}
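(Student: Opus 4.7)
The plan is to reduce the theorem to a convergence statement: given a sequence of smooth curves $\Gamma_n$ in $\Mh$ approaching a stable curve $\Gamma_0\in\oM_{g,\hat 1}$, the sections $\Psi_n:=\Psi_{\Gamma_n}$ converge, uniformly on compact subsets of $\tilde\Gamma_0\setminus(\text{nodes}\cup\{q\})$, to the differential described in the theorem. Since the line bundle $\omega_\Gamma+2q$ extends over $\oM_{g,\hat 1}$, any subsequential limit is automatically a meromorphic differential on the normalization $\tilde\Gamma_0$ with the prescribed double pole at $q$ and at most simple poles at the preimages of each node, with opposite residues across the node. I would then proceed in three steps: a compactness step, a residue-vanishing step at the nodes, and an identification-by-uniqueness step on each component.

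For compactness, choose plumbing coordinates $x_\nu y_\nu=t_\nu$ near each node $\nu$ of $\Gamma_0$, and a continuous reference section $\omega_t$ of $\omega_\Gamma+2q$ with the prescribed singular part at $q$. Writing $\Psi_t-\omega_t=\sum_j c_j(t)\,\Omega_j(t)$ in a basis $\Omega_j(t)$ of holomorphic differentials dual to a chosen symplectic basis of cycles, the real-normalization equations $\mathrm{Im}\oint_{A_j}\Psi_t=0$ and $\mathrm{Im}\oint_{B_j}\Psi_t=0$ become a real linear system on $(\mathrm{Re}\,c_j,\mathrm{Im}\,c_j)$ with coefficient matrix built from $\mathrm{Re}\,\tau(t)$ and $\mathrm{Im}\,\tau(t)$. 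By the Fay plumbing asymptotics one has $\tau_{\nu\nu}(t)\sim\tfrac{1}{2\pi i}\log t_\nu$ for each non-separating vanishing cycle, so $\mathrm{Im}\,\tau(t)$ has diagonal entries going to $+\infty$ in those directions; nonetheless $\mathrm{Im}\,\tau(t)$ remains positive definite, and an elementary inversion shows that the system admits a solution $c_j(t)$ that stays bounded as $t\to 0$, with the coefficients attached to vanishing cycles tending to $0$ at rate $1/\log|t_\nu|$. This yields a uniform $L^\infty$ bound on $\Psi_n$ on compact subsets of $\tilde\Gamma_0\setminus(\text{nodes}\cup\{q\})$, and a Montel-type argument then extracts a subsequence converging to a meromorphic differential $\Psi_0$ on $\tilde\Gamma_0$ of the type above.

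Next, I would show that $\Psi_0$ has no residues at the nodes. Fix a node $\nu$ and let $r$ denote the residue of $\Psi_0$ at one of its branches. If $\nu$ is separating, apply the residue theorem to $\Psi_0$ on the connected piece of the normalization on the side not containing $q$: since $\Psi_0$ has no other poles there, $r=0$ immediately. If $\nu$ is non-separating, note that the $A$-period of $\Psi_t$ around the vanishing cycle at $\nu$ is real for all $t$ and tends to $2\pi i\cdot r$, forcing $r\in i\bbR$. On the other hand the dual $B$-period of $\Psi_t$, when the differential is expanded in the plumbing coordinates, acquires a term $r\log t_\nu$ whose imaginary part $\mathrm{Im}(r)\log|t_\nu|$ diverges unless $r=0$; since the $B$-period is real and must stay bounded (by the bound on the $c_j(t)$ from the previous step), we conclude $\mathrm{Im}(r)=0$, and combined with $r\in i\bbR$ this forces $r=0$. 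Thus $\Psi_0$ on $\tilde\Gamma_0$ has only the prescribed double pole at $q$, with no poles at the nodes.

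Finally, I would identify $\Psi_0$ component by component. On any component $\Gamma^{(i)}$ not containing $q$, the restriction $\Psi_0|_{\Gamma^{(i)}}$ is a holomorphic differential whose periods along every cycle $\gamma\in H_1(\Gamma^{(i)},\bbZ)$ are limits of the real numbers $\oint_\gamma\Psi_t$ (for $\gamma$ viewed as a cycle on a nearby smooth $\Gamma_t$), hence real; by the uniqueness argument used in the proof of Proposition \ref{Psi} (a holomorphic differential with $\Omega=\bar\Omega$ is identically zero), $\Psi_0|_{\Gamma^{(i)}}\equiv 0$. On the component $\Gamma^{(i_0)}$ containing $q$, $\Psi_0|_{\Gamma^{(i_0)}}$ is a meromorphic differential with the prescribed singular part at $q$, no other poles, and all periods real, hence by Proposition \ref{Psi} it is the unique real-normalized differential with that singular part. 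Since this description of $\Psi_0$ is independent of the subsequence chosen, the full sequence $\Psi_n$ converges, which simultaneously proves continuity of the extension and gives the explicit formula in the theorem. The main obstacle is the a priori bound in the compactness step: one must carefully use the plumbing asymptotics of $\tau(t)$ to show that the linear system defining the real-normalized correction remains solvable with bounded solution as one approaches the boundary.
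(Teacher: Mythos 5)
Your proposal is correct in outline but proceeds by a genuinely different route from the paper. You work in plumbing coordinates and control the real-normalized differential through the Fay asymptotics of the period matrix: writing $\Psi_t=\omega_t+\sum c_j(t)\Omega_j(t)$ and inverting the real linear system coming from the normalization conditions, you get boundedness (indeed decay like $1/\log|t_\nu|$) of the coefficients along vanishing cycles, then kill the residues at the nodes by the residue theorem (separating case) and by the reality/boundedness of the $A$- and $B$-periods around the vanishing cycle (non-separating case), and finally identify the limit component by component via the uniqueness of Proposition \ref{Psi}. The paper instead runs a single soft argument: the function $f_t=\mathrm{Im}\int^p\Psi_t$ is harmonic on $\G_t$ minus a small disk $D_\varepsilon$ about $q$, so by the maximum principle it is bounded above and below by its values on $\partial D_\varepsilon$, which are controlled uniformly in $t$ by the prescribed singular part; the limit $\mathrm{Im}\int^p(\Psi_0-\Phi)$ is then a bounded harmonic function on (each component of the normalization of) the stable curve, hence constant, forcing $\Psi_0=\Phi$. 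The trade-off is real. Your approach yields quantitative rates and would adapt to settings where no harmonic potential is available, but all of its difficulty is concentrated in the a priori bound you flag at the end: one must choose the reference section $\omega_t$ so that its limit is holomorphic at the nodes (sections of $\omega_\G$ on a stable curve may have simple poles there), control $(\mathrm{Im}\,\tau)^{-1}$ with several simultaneously degenerating entries, and handle the unbounded $\mathrm{Re}\,\tau_{\nu\nu}\sim\arg t_\nu/2\pi$ term, which forces you to fix a symplectic basis on a sector in each $t_\nu$ (i.e., to work modulo the Dehn-twist monodromy) before the linear system even makes sense along an arbitrary sequence approaching the boundary. The paper's maximum-principle argument gets the uniform bound for free from the singular part at $q$ alone, and --- as the authors emphasize in the subsequent remark --- is insensitive to the particular degenerating family, so no versal-deformation or plumbing analysis is needed. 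Both arguments correctly isolate why the statement fails for differentials of the third kind: in your language the separating-node residue argument breaks when $p_1$ and $p_2$ lie on different components; in the paper's, one only gets an upper bound on one boundary circle and a lower bound on the other.
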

\begin{proof}
As we did above for differentials of the third kind, choose a point $p_0\ne q$ on $\G$ and consider the function $f(p):={\rm Im}\int_{p_0}^p \Psi$. Since $\Psi$ has real periods, this is a well-defined function on $\G\setminus\lbrace q\rbrace$, and in this case $f$ diverges to both $\pm\infty$ at $q$, i.e.~in any neighborhood of $q$ it takes arbitrarily large and small values. Let us now choose a small open disk $D\subset \G$ around $q$. The function $f$ is a real harmonic function on the open Riemann surface $\G\setminus D$. By the maximum principle it must then achieve its maximum (and also minimum) on the boundary $\partial D$.

Consider now a family $\G_t\subset\oM_{g,\hat 1}$ degenerating to a stable curve $\G_0$. On each $\G_t$ choose a small neighborhood $D_\varepsilon$ of the point $q$, of size $\varepsilon k$, where $k$ is the chosen cotangent vector at $q$, not containing any nodes for any $t$. This is always possible --- if the family degenerates by acquiring nodes away from $q$, this is clear; for the degeneration when a node develops and approaches $q$ the stable model has a blowup at this point, and thus on the blown up $\P^1$ there is an open disk around the marked point there not containing the nodes.

Then the function $f_t$ is bounded on $X_t\setminus D_\varepsilon$ above and below by its values on $\partial D_\varepsilon$. However, since the singular part of $\Psi$ at $q$ is prescribed, we can write down the singular part of the expansion of $f_t$ near $q$, and thus the values of $f_t$ on $\partial D_\varepsilon$ are bounded independent of $t$. Thus the limit function $f(p)={\rm Im}\int^p\Psi_0$ must also be bounded on $X_0\setminus D_\varepsilon$. We want to show that $\Psi_0$ is equal to the differential $\Phi$ determined by the condition of its holomorphicity at the nodes. Indeed, let us take $F(p):={\rm Im}\int^p(\Psi_0-\Phi)$. This is a real harmonic function on $X_0$, bounded on $X_0\setminus D_\varepsilon$ by the above argument, but also bounded in the neighborhood $D_\varepsilon\supset p_0$ since $\Psi_0-\Phi$ is holomorphic at $p_0$. Thus $F$ is a bounded harmonic function on a compact Riemann surface $X_0$ (to be more precise, on each component of the normalization), and thus is constant, which implies $\Psi_0=\Phi$.
\end{proof}
\begin{rem}
The compactification of $\oM_{g,1}$ has boundary strata corresponding to the case of the marked point approaching the node in the limit --- in this case the stable reduction is to take a blowup, and thus we would end up with an attached $\bbC\bbP^1$ with a marked point and fixed coordinate $z$ at the marked point, which can be extended to $\bbC\bbP^1$. In this case $dz/z^2$ is the unique meromorphic differential on $\bbC\bbP^1$ with a double pole and given singular part --- there are no periods to consider.
\end{rem}
\begin{rem}
In the theory of limit linear series determining the limit of a line bundle on a reducible curve is very complicated \cite{es-me}, and in fact the compactification of the universal Picard scheme over $\oM_g$ has several connected components over the boundary \cite{ca}. In dealing with limit linear series, it may not even be enough to consider limits in the versal deformation space \cite{mu}: studying the limit of the line bundle for degenerating families of curves with higher order tangency to the boundary of $\oM_g$ may be needed. Note, however, that in our proof we only use the maximum principle for harmonic functions, which holds independent of the degenerating family considered.
\end{rem}
\begin{rem}
The above theorem fails for differentials of the third kind, i.e.~for sections $\Psi_{X,p,q,k}$ over $\M_{g,2}$. Indeed, if one tries to apply the same proof, the neighborhoods of both points $p$ and $q$ need to be removed, and $f$ could achieve its maximum on the boundary of one neighborhood, and the minimum on the boundary of the other. If points $p$ and $q$ lie on different components of the nodal curve $X_0$, then on any component we would only have either the lower or the upper bound for $f_t$, and thus it is possible for the limit $\Psi_0$ to acquire simple poles with residues $\pm k$ at the nodes of $X_0$. It can in fact be shown that this is the only possible limit, i.e.~that no twisting of the bundle by the components of the reducible curve is possible.
\end{rem}

\end{document}